\def\th@plain{
  \thm@notefont{}
  \itshape
}
\def\th@definition{
  \thm@notefont{}
  \normalfont
}
\newcommand{\la}{\lambda}
\title{Integer Compositions Applied to the Probability Analysis of Blackjack and the Infinite Deck Assumption}
\author{Jonathan Marino and David G. Taylor}
\date{} 
\begin{document}
\maketitle

\begin{abstract}
Composition theory can be used to analyze and enumerate the number of ways a dealer in Blackjack can reach any given point total. The rules of Blackjack provide several restrictions on the number of compositions of a given number.  While theory guarantees a specific number of unrestricted compositions of any positive integer, we must subtract the number of compositions not allowed in Blackjack.  We present a constructive approach to enumerate the number of possible compositions for any point value by deleting those illegal compositions from the total number of unrestricted compositions. Our results cover all possible cases and also generalize to changes to the rules of Blackjack, such as the point value where the dealer must stand.  Using the infinite deck assumption, we also find the approximate probability that the dealer reaches that point total.  
\end{abstract}


\section{Introduction}

In this paper, we explore Blackjack in terms of restricted compositions of the natural numbers.  The rules of Blackjack impose several interesting restrictions on compositions of numbers including how to score an Ace as either one or eleven.  We provide two ways to enumerate such compositions. One case is a closed form, useful in calculating the number of ways the dealer can reach a number within 11 of the his or her face up card, and the other is a general form to calculate all possible situations.  Although the closed form is sufficient in most situations, the general form abstracts all of the rules of Blackjack, including the range of card values, the rule where the dealer must stand, and the target total of points.

This pattern was the result of counting the number of ways the dealer could reach a total of 17 points with a face up card of 10 explicitly.  If the dealer drew no more cards, there was only one way to reach 17, that is, the dealer's face down card was a 10: $10+7$.  If the dealer drew one more card beyond the face up card and face down cards he or she currently has, there were five ways,\begin{align*}&10+2+5 \\ &10+3+4 \\ &10+4+3 \\ &10+5+2 \\ &10+6+1. \end{align*}  Continuing to enumerate the possibilities, we have the following pattern \begin{center}
\begin{tabular}{l | *{6}{c}}
Number of cards & 0 & 1 & 2 & 3 & 4 & 5 \\
\hline
Ways to reach 17 & 1 & 5 & 10 & 10 & 5 & 1

\end{tabular}
\end{center}
which is fifth row of Pascal's Triangle and thus the binomial coefficients for $n=5$.  We explore why this pattern occurs and generalize our findings.


\section{Partitions and Compositions}


\theoremstyle{definition}
\newtheorem{comp}{Definition}[section]
\begin{comp}

\cite{textbook} Let $c(m,n)$ denote the \emph{number of compositions} of $n$ with exactly $m$ parts.  That is, $c(m,n)$ is the number of integer partitions of $n$ with $m$ parts of the form $$n=\sum_{i=1}^{m}  \lambda_{i} \text{ where  } \lambda_i \in \mathds{Z}^+$$ where order matters. We call each $\la_i$ a \emph{part} of the composition, and let $| \la | = n$ be the sum of all the parts.  For a composition of $n$ with $m$ parts, we may write the composition as $\la_1+\la_2+\cdots+\la_m$ or $(\la_1,\la_2,\dots,\la_m)$.
Further, it has been shown that $$c(m,n)={n-1 \choose m-1}.$$

\end{comp}

The ability to enumerate compositions as binomial coefficients is key to proving the aforementioned pattern as fact.  To aide in counting the restricted compositions, we introduce the following notation.


\theoremstyle{definition}
\newtheorem{caplamb}[comp]{Definition}
\begin{caplamb}\label{caplamb}

Let $\Lambda(m,n)$ be the set of all compositions of $n$ with $m$ parts.  That is, $$\Lambda(m,n) = \left\{ (\la_1,\la_2,\dots,\la_m) : |\la|=n \right\}. $$  Also, it is important to note that $$|\Lambda(m,n)|=c(m,n)={n-1 \choose m-1}.$$
\end{caplamb}  

It is also helpful to visualize partitions and compositions as boxes in the form of Young tableau.


\theoremstyle{definition}
\newtheorem{youngt}[comp]{Example}
\begin{youngt}

Let $(3,2,4,1)$ be a composition of of 10 with 4 parts.  Then the \emph{Young tableau} of the composition is $$\yng(3,2,4,1)$$ where we note that the top 3 blocks represents $\la_1=3$, the second row of blocks represents $\la_2=2$, and so on.  There are total of $n=10$ blocks, and there are $m=4$ rows corresponding to the 4 parts of the composition.

\end{youngt}

Those familiar with partition theory will recognize Young tableaux.  Partitions are compositions with the restriction that $\la_i \ge \la_j$ if $i<j$, that is, for a partition of $n$ with $m$ parts, we have that $\la_1 + \la_2 + \cdots + \la_m=n$ and $\la_1 \ge \la_2 \ge \cdots \ge \la_m$.  Young tableaux for partitions have the same typical structure.  For example, consider the partition $4+3+2+1+1$.  Then we have $$\yng(4,3,2,1,1)$$ for the Young tableau representing $(4,3,2,1,1)$.  The Young tableau for any given partition will look like an ``upside down staircase'' due to the restriction that the parts be listed in descending order.

These definitions, along with the following lemma, can be used to help describe our pattern.


\theoremstyle{plain}
\newtheorem{pas}[comp]{Lemma}
\begin{pas}[Variation on Pascal's Rule]\label{pasprop}

For any $n \in \mathds{Z}^+$, we have $${n \choose k}-{n-1 \choose k-1}={n-1 \choose k} .$$

\end{pas}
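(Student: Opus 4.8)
The plan is to observe that the claimed identity is nothing more than a rearrangement of the standard Pascal's rule $\binom{n}{k}=\binom{n-1}{k-1}+\binom{n-1}{k}$: subtracting $\binom{n-1}{k-1}$ from both sides yields exactly $\binom{n}{k}-\binom{n-1}{k-1}=\binom{n-1}{k}$. So it suffices to establish Pascal's rule itself, and I would do this in one of two standard ways.

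The first route is a direct algebraic computation. Assuming $1 \le k \le n-1$ so that all quantities are the usual nonzero binomial coefficients, I would expand $\binom{n-1}{k-1}+\binom{n-1}{k}$ as $\frac{(n-1)!}{(k-1)!(n-k)!}+\frac{(n-1)!}{k!(n-1-k)!}$, put the two fractions over the common denominator $k!(n-k)!$, factor out $(n-1)!$, and combine the numerators $k+(n-k)=n$ to obtain $\frac{n!}{k!(n-k)!}=\binom{n}{k}$. This is routine and I would not belabor it.

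The second route, which I find cleaner to present, is the combinatorial double-counting argument: fix an $n$-element set, say $\{1,2,\dots,n\}$, and count its $k$-element subsets in two ways. On one hand there are $\binom{n}{k}$ of them. On the other hand, partition these subsets according to whether they contain the element $n$: those that do are in bijection with $(k-1)$-subsets of $\{1,\dots,n-1\}$, of which there are $\binom{n-1}{k-1}$, and those that do not are exactly the $k$-subsets of $\{1,\dots,n-1\}$, of which there are $\binom{n-1}{k}$. Equating the two counts gives Pascal's rule, and hence the lemma.

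There is essentially no substantive obstacle here; the only point requiring any care is bookkeeping the boundary conventions (e.g.\ $\binom{m}{0}=1$, $\binom{m}{j}=0$ for $j<0$ or $j>m$) so that the statement holds uniformly for all $n\in\mathds{Z}^+$ and all $k$, not merely in the range $1\le k\le n-1$. Since the lemma will only ever be applied in the "interior" range when counting restricted compositions, I would either state it with that mild hypothesis or remark briefly that the edge cases are immediate from the conventions.
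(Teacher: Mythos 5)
Your proposal is correct and matches the paper's proof, which likewise just rearranges the standard Pascal's rule $\binom{n-1}{k}+\binom{n-1}{k-1}=\binom{n}{k}$; the paper simply takes that rule as known rather than re-deriving it. Your extra justifications of Pascal's rule and the remark about boundary conventions are fine but not needed.
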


\begin{proof}

The lemma follows directly from a manipulation of Pascal's Rule, $${n-1 \choose k}+{n-1 \choose k-1} = {n \choose k}.$$

\end{proof}

\section{Rules and Restrictions of Blackjack}


Blackjack has several interesting rules that restrict the number of possibilities of reaching a given point total. Each rule provides a restriction on the total number of compositions that we must remove from the total number, $c(m,n)$.  We will start with the total number of compositions of a given number and remove those compositions that are not allowed by the rules of Blackjack.

Since each card has a point value associated with it between 1 and 11, each part of the composition must be less than or equal to 11.  Restriction 1 removes compositions with pieces too big to be represented by playing cards.


\theoremstyle{definition}
\newtheorem{partsrule}{Restriction}

\begin{partsrule}
For every composition of point values, each part, $\la_i$ must be less than or equal to $A$, the highest valued card.  Note that in standard play, $A=11$.

\end{partsrule}

Next, it is usually the case that the first unknown card is not worth one, since if the first unknown is an ace, it would be worth 11.  In general, an Ace can only be worth 1 point if the dealer already has more than 10 points, causing the dealer to bust if the Ace is counted as 11. 


\newtheorem{rule1}[partsrule]{Restriction}
\begin{rule1}

The first part of the composition usually cannot be a one, that is, the composition cannot take the form $$1+\la_2+\la_3+\cdots+\la_m.$$

\end{rule1}

If the first unknown is an ace, it would be worth 11.  In general, an Ace can only be worth 1 point if the dealer already has more than 10 points, causing the dealer to bust if the Ace were counted as 11.  In a few rare cases, the first card could be a 1.  For example, consider the situation where a dealer with a face up two trying to reach 19.  Then we allow $(1,3,10,3)$ since $$2 + (1 + 3 + 10 + 3) = 19.$$  However, this can only happen when the difference between the dealer's goal and his or her face up card is greater than 11.  We explore this in detail in the general case of the resulting theorems.

Additionally, the dealer must stand at a certain point value.  In standard play, the dealer is required to stop once they reach 17.  If the dealer is trying to reach 18, the last card cannot be worth 1, since the dealer will reach 17 in $m-1$ steps.  Similarly, if the dealer is trying to reach 19, the last card cannot be worth 1 or 2, as the dealer would have to stand at 18 or 17, respectively, and be unable to draw additional cards.


\newtheorem{rulelast}[partsrule]{Restriction}
\begin{rulelast}

The last part of the composition cannot be any positive integer less than or equal to the difference between the desired number of points, $w$, and the point value where the dealer must stand, $s$, that is, the composition cannot take the form $$\la_1+\la_2+\la_3+\cdots+\la_{m-1}+y$$ where $y \in \mathds{Z} $ such that $1\le y \le w-s.$

\end{rulelast}

The previous two restrictions have dealt with Aces at the beginning and end of compositions.  Aces can be worth 1 or 11, but the dealer defaults to treating an Ace as an 11.  He or she continues to count the Ace as an 11 unless future cards cause the dealer to go over 21;  this situation allows the dealer to change the Ace to a 1 and reevaluate the current point total.  To account for the behavior of Aces in the middle of our compositions, we introduce the following definition.


\newtheorem{lambdaset}[comp]{Definition}
\begin{lambdaset}\label{i-ace}

If $d$ is the point value of the dealer's face up card and $s$ is the point value where the dealer must stand, then we define the \emph{i-Ace set} to be the subset of all compositions of $s-d$ consisting of all compositions of $10-d$ of length $i$.  That is, the set of compositions of length $i$ of the form $$\tau_i =\left\{ (\la_1,\la_2,\dots,\la_i) :  |\la| \le 10-d, \la_j \ge 2  \right \}. $$

\end{lambdaset}

Plainly, each $i$-Ace set represents compositions of $s-d$ of length $m > i+1$ where part $i+1$ is a 1 in situations where the 1 must be an 11. The restriction that each part be greater than 2 is to make sure that the $\la_{i+1}=1$ in question is indeed the first 1 in the composition.  Symbolically,  these are compositions of the form $$\la_1+\la_2+\cdots+\la_i+1+\la_{i+2}\cdots+\la_m = s-d$$  where $d +\la_1+ \cdots + \la_i \le 10$.  This forces the 1 to be an 11. While these are perfectly allowable compositions of $s-d$, they are not allowed in Blackjack.


\newtheorem{exset}[comp]{Example}
\begin{exset}

Consider the situation where the player is against the dealer, who has a face up 2 ($d=2$) and must stand at 17 ($s=17$). The player is interested in knowing how many ways the dealer can reach 17 with three additional cards ($m=3$), including the unknown face down card.  We are considering compositions of 15 of length 3. Then the 1-Ace set would be the set $$\tau_1= \{(\la_1,\la_2,\la_3) : \la_2=1, \la_{i\ne 2}\ge 2\}.$$  This represents the compositions $$
\begin{aligned}
2&+1+12 \\
3&+1+11 \\
4&+1+10 \\
5&+1+9 \\
6&+1+8 \\
7&+1+7 \\
8&+1+6
\end{aligned}$$
Note that the first part of each composition is of the form $\la_1 \le 8=10-2$.  While these are all compositions of 15, we must delete them from our total number of legal Blackjack compositions since, in these situations, it would be impossible that the second card is worth 1 since the dealer would be forced to consider the Ace an 11.  However, the composition $3+11+1$ is allowable and not counted in the 1-Ace set.
\end{exset}

Since the $i$-Ace sets are placeholders for certain illegal compositions, we must remove them from our total.


\newtheorem{acerule}[partsrule]{Restriction}
\begin{acerule}

The compositions represented by the $i$-Ace sets must be deleted.

\end{acerule}

However, if the composition in question is small enough, we need not concern ourselves with the \emph{i}-Ace sets.  In proving the theorem that explains our pattern by relating the number of allowable compositions to binomial coefficients, the following lemma will be useful.


\newtheorem{rmk9}[comp]{Lemma}
\begin{rmk9}\label{tauempty}

Let $w$ be the point total the dealer is trying to reach and $d$ be the dealer's face up card.  If the difference between the dealer's goal and face up card is less than 11, that is, $w-d\le 11$, then $\tau_i = \varnothing$ for all $i$.
\end{rmk9}

\begin{proof}

Let $w-d \le 11$.  Then $w \le 11+d$.  If $\tau_i \not= \varnothing$, then there exists some composition $\la_1+ \la_2 + \cdots + \la_i \le 10-d$.  Then we have, $d+\la_1+\cdots+\la_i+11 \le 21$.  But since each $\la_i \in \mathds{Z}^+$, $d+11 < d+\la_1+\cdots+\la_i+11$, and so $w \le 11+d < d+\la_1+\cdots+\la_i+11$, that is, our composition is larger than the target number of points.  Therefore that composition cannot exist since the Ace could not be worth 11 points, and so $\tau_i = \varnothing$.

\end{proof}

We now have the necessary tools to investigate our pattern further.


\section{Results}
If the dealer's face up card has a high enough value, the number of allowable compositions yields a closed form.


\theoremstyle{plain}
\newtheorem{thm}{Theorem}[section]
\begin{thm}\label{pretty}

Let $s$ be the the point value where the dealer must stand, $d$ be the point value of the dealer's face-up card, and $w$ be the desired number of points. If $w-d \le 11$, then the number of ways, $n$, the dealer can reach a total of $w$ points with $m$ cards (excluding the initial face up card) is $$n(m,s,d)=\binom{s-d-2}{m-1}.$$

\end{thm}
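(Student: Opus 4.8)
The plan is to translate the question into counting a restricted subset of $\Lambda(m,w-d)$: the $m$ unknown cards (the face-down card together with the cards drawn afterward) form a composition $(\la_1,\dots,\la_m)$ of $w-d$, and such a composition is a legal Blackjack line exactly when it survives Restrictions~1--4. By Definition~\ref{caplamb} the total to start from is $|\Lambda(m,w-d)|=\binom{w-d-1}{m-1}$, and the proof is then the arithmetic of deleting the illegal families and collapsing what remains with Lemma~\ref{pasprop}. Throughout I use that $w\ge s$ (so the sum below is well posed) and, for the statement to be non-vacuous, $s-d\ge 2$.

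First I would dispose of the two restrictions that remove nothing in the regime $w-d\le 11$. Restriction~1 (no part exceeds $11$) is vacuous here: if $m\ge 2$ then every part is at most $(w-d)-(m-1)\le 10$, while if $m=1$ the single part equals $w-d\le 11$, a legal card value. Restriction~4 (the $i$-Ace sets) is vacuous by Lemma~\ref{tauempty}, since $w-d\le 11$ forces $\tau_i=\varnothing$ for every $i$. Moreover, because $w-d\le 11$ we are \emph{not} in the rare situation described after Restriction~2, so a leading $1$ is always illegal and the entire family $\{\la_1=1\}$ must be deleted.

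The real content is Restrictions~2 and~3, which I would handle by inclusion--exclusion on the bad events $B=\{\la_1=1\}$ and $C=\{1\le\la_m\le w-s\}$. Shifting the first part down by $1$ gives $|B|=\binom{w-d-2}{m-2}$; fixing $\la_m=y$ and counting the first $m-1$ parts gives $|C|=\sum_{y=1}^{w-s}\binom{w-d-y-1}{m-2}$; fixing both gives $|B\cap C|=\sum_{y=1}^{w-s}\binom{w-d-y-2}{m-3}$. Hence
\[
n=\binom{w-d-1}{m-1}-\binom{w-d-2}{m-2}-\sum_{y=1}^{w-s}\left(\binom{w-d-y-1}{m-2}-\binom{w-d-y-2}{m-3}\right).
\]
Applying Lemma~\ref{pasprop} to the leading pair turns it into $\binom{w-d-2}{m-1}$, and applying Lemma~\ref{pasprop} to each summand rewrites the sum as $\sum_{y=1}^{w-s}\binom{w-d-y-2}{m-2}$; reindexing and telescoping this sum (again by repeated use of Pascal's rule, equivalently Lemma~\ref{pasprop}) gives $\binom{w-d-2}{m-1}-\binom{s-d-2}{m-1}$. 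The two copies of $\binom{w-d-2}{m-1}$ cancel, leaving $n=\binom{s-d-2}{m-1}$; in particular the dependence on $w$ evaporates, which is the point worth flagging in the write-up.

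The main obstacle is not conceptual but combinatorial bookkeeping: keeping the inclusion--exclusion signs and the index ranges of the telescoping sum straight, and checking the degenerate cases under the convention $\binom{a}{b}=0$ for $b<0$ or $b>a$ --- especially $m=1$ (where ``first part'' and ``last part'' coincide, so $B\subseteq C$ and one must not subtract twice) and small $m$ or $d$ near $s$ (where several binomials vanish and the identities must be read carefully). As an independent check one can also give the slick bijective version: for $m\ge 2$, replacing $\la_1$ by $\la_1-1$ and $\la_m$ by $\la_m-(w-s)$ is a bijection from the legal compositions onto $\Lambda(m,\,(w-d)-1-(w-s))=\Lambda(m,s-d-1)$, whose size is $\binom{s-d-2}{m-1}$ by Definition~\ref{caplamb}; this makes the cancellation transparent.
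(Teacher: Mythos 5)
Your proposal is correct and follows essentially the same route as the paper's proof: discard Restrictions~1 and~4 as vacuous when $w-d\le 11$, compute $R_2$, $R_3$, and the doubly-removed overlap by inclusion--exclusion, and collapse the result with Lemma~\ref{pasprop}. The closing bijective sanity check (shifting $\la_1$ down by $1$ and $\la_m$ down by $w-s$) is a nice addition not present in the paper, but the core argument is the same.
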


\begin{proof}
We are considering compositions of $w-d$.  The total number of compositions of $w-d$ with $m$ parts is given as $$c(m, w-d)={w-d-1 \choose m-1}$$ We will show that by applying the rules of Blackjack, we may remove exactly enough compositions to reach the stated number of compositions.  Let $R_n$ be the number of compositions removed by the $n^{th}$ restriction.  Then $$n(m,s,d)=c(m,w-d)-R_1-R_2-R_3-R_4.$$  Note that since $w-d\le 11$, no composition of $w-d$ can have parts $\la_i \ge 12$, and thus $R_1=0$.  Also, $\tau_i=\varnothing$ for $i=1,2,\dots,m-2$ by Lemma \ref{tauempty}.  So $R_4=0$.  

Consider Restriction 2.  Since the first card cannot be an Ace, we must subtract the number of compositions of $w-d$ that begin with 1, that is, any composition of the form $1+\la_2+\la_3+\cdots+\la_m$.  Note that if $1+\la_2+\la_3+\cdots+\la_m=w-d$, then $\la_2+\la_3+\cdots+\la_m=w-d-1$.  So these are compositions of $w-d-1$ with $m-1$ parts.  We can enumerate these compositions as $$R_2=c(m-1,w-d-1)={w-d-2 \choose m-2}.$$  

Now consider Restriction 3.  Since the dealer must stand on $s$, the last card cannot have a value of between 1 and $w-s$.  Thus $R_3$ must be the number of compositions of $w-d$ of the form $\la_1+\la_2+\cdots+\la_{m-1}+y$, where $y \in \mathds{Z} $ such that $1\le y \le w-s$.  Note that if $\la_1+\la_2+\cdots+\la_{m-1}+y=w-d$, then $\la_1+\la_2+\cdots+\la_{m-1}=w-d-y$.  So $R_3$ is the number of compositions of $w-d-y$ with $m-1$ parts.  Summing over all of the possibilities of $y$, we have that $$R_3 = \sum_{i=1}^{w-s} c(m-1,w-d-i)=\sum_{i=1}^{w-s} {w-d-i-1 \choose m-2}.$$  However, for each composition ending with $y$ we remove, we have already removed the composition $1+\cdots+y$, so we must add it back.  So if $1+\la_2+\la_3+\cdots+\la_{m-1}+y=w-d$, then $\la_2+\la_3+\cdots+\la_{m-1}=w-d-y-1$.  Let $R^*$ be the number of doubly removed compositions, that is, the number of compositions of $w-d-i-1$ with $m-2$ parts which we must add back. Summing over all of the possibilities of $y$, we have that $$R^*=\sum_{i=1}^{w-s} c(m-2,w-d-i-1)=\sum_{i=1}^{w-s} {w-d-i-2 \choose m-3}.$$  Combining these, we have an adjusted total for the number of compositions of $w-d$ with length $m$ allowed in Blackjack,
\begin{align*}
	n(m,s,d)= &\text{ }c(m,w-d)-R_2-R_3+R^* \\
	= & {w-d-1 \choose m-1}-{w-d-2 \choose m-2} \\
	&-\sum_{i=1}^{w-s} {w-d-i-1 \choose m-2}+\sum_{i=1}^{w-s} {w-d-i-2 \choose m-3} \\
	=&{w-d-1 \choose m-1}-{w-d-2 \choose m-2} \\
	&-\sum_{i=1}^{w-s} \left( {w-d-i-1 \choose m-2}-{w-d-i-2 \choose m-3} \right).
\end{align*}
Using Lemma \ref{pasprop}, we may combine the first two terms and simplify the summand, yielding $$n(m,s,d)={w-d-2 \choose m-1}-\sum_{i=1}^{w-s} {w-d-i-2 \choose m-2}. $$ Expanding the summation, for the right hand side, we have
\begin{align*}
{w-d-2 \choose m-1} &-{w-d-3 \choose m-2}- {w-d-4 \choose m-2}- \cdots \\ \cdots &- {w-d-(w-s-1)-2 \choose m-2}-{w-d-(w-s)-2 \choose m-2}.
\end{align*}We may again simplify the first two terms using Lemma \ref{pasprop}, equaling 
\begin{align*}
{w-d-3 \choose m-1}& -{w-d-4 \choose m-2}- {w-d-5 \choose m-2}- \cdots \\ \cdots &- {w-d-(w-s-1)-2 \choose m-2}-{w-d-(w-s)-2 \choose m-2}.
\end{align*}
Again, we may combine the first two terms.  With each simplification, the number of parts in the first coefficient is preserved while the next term is still offset by one. This allows us to collapse the sum.  We are left with $$ n(m,s,d)={w-d-(w-s-1)-2 \choose m-1}-{w-d-(w-s)-2 \choose m-2}.$$ Which simplifies to $$n(m,s,d)={w-d-(w-s)-2 \choose m-1}={s-d-2 \choose m-1}.$$

\end{proof}


\theoremstyle{definition}
\newtheorem{rm1}[thm]{Remark}
\begin{rm1}
The number of possible compositions to reach a total of $w$ does not depend on $w$ at all, but only the dealer's stand rule, $s$, and the dealer's face up card, $d$.  Since Theorem \ref{pretty} enumerates the number of compositions as a binomial coefficient, we see that as $m$ increases, we move along the $(s-d-2)$ row of Pascal's Triangle, agreeing with the aforementioned pattern.
\end{rm1} 


\newtheorem{ex1}[thm]{Example}
\begin{ex1}

Consider the situation of a player against a dealer that must stand at $s=17$ wondering how many ways the dealer can reach $w=18$ points with a face up Queen ($d=10$) without drawing any more cards.  Then we have $$n(1,17,10)={17-10-2 \choose 1-1}={5 \choose 0} = 1.$$  Representing the situation that the dealer has an 8 as their face down card.

\end{ex1}


\newtheorem{ex2}[thm]{Example}
\begin{ex2}
Now consider the situation where the dealer has a face up 9 ($d=9$).  Then the number of ways the dealer could reach a total of 19 points after revealing $m=2$ more cards is
$$n(3,17,9)={17-9-2 \choose 3-1}={6 \choose 1} = 6.$$ This represents the compositions \begin{align*}&9+2+8 \\ &9+3+7\\&9+4+6\\&9+5+5\\&9+6+4\\&9+7+3.\end{align*}  Note that the compositions $9+8+2$ and $9+9+1$ would cause the dealer to stand on 17 and 18, respectively, so they are removed from the total number of legal compositions.
\end{ex2}


\subsection{General Case}
The hypothesis of Theorem \ref{pretty} assumed $w-d \le 11$. We generalize the previous result to allow the dealer to have any face up card.

\theoremstyle{plain}
\newtheorem{generalthm}[thm]{Theorem}
\begin{generalthm}[General Case]\label{generalcase}

Let $s$ be the the point value where the dealer must stand, $d$ be the point value of the dealer's face-up card, $w$ be the desired number of points, $b$ be the value on which players bust, and $A$ be the value of the highest ranked card.  Then the number of ways, $n$, the dealer can reach a total of $w$ points with $m$ cards (excluding the initial face up card) is 

\begin{align*}
g(m,w,s,d)=  &\binom{s-d-2}{m-1}-  m\sum_{i=A+1}^{w-d} {w-d-i-1 \choose m-2} \\
&-\sum_{j=2}^{10-d} \sum_{i=0}^{m+3} {j-i-2 \choose i}{s-d-j \choose m-3-i} \\
&+\sum_{k=b-d-10}^{s-d-2} \sum_{i=1}^{m-2} {k-b+s-2 \choose i-1}\cdot {w-d-k-2\choose m-i-2}.
\end{align*}

\end{generalthm}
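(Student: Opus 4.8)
The plan is to extend the argument used for Theorem~\ref{pretty}: begin with all $c(m,w-d)=\binom{w-d-1}{m-1}$ compositions of $w-d$ into $m$ parts, delete those forbidden by Restrictions~1--4, use inclusion--exclusion to repair any composition deleted more than once, and finally collapse the resulting alternating sums of binomial coefficients by repeated application of Lemma~\ref{pasprop}. Each of the four displayed summands will correspond to one stage of this bookkeeping: $\binom{s-d-2}{m-1}$ is precisely the quantity obtained by handling Restrictions~2 and 3 together exactly as in Theorem~\ref{pretty} (with their overlap $R^{*}$ added back), so the remaining three sums record the additional corrections that arise once the hypothesis $w-d\le 11$ is dropped.

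For Restriction~1 I would first observe that in the relevant parameter range (with $w\le b$ and $d\ge 1$, so $w-d<2(A+1)$) a composition of $w-d$ can contain at most one part exceeding $A$. Hence the number of compositions to delete is obtained by choosing the position of the oversized part ($m$ ways), choosing its value $i$ from $A+1$ to $w-d$, and filling the other $m-1$ parts with an arbitrary composition of $w-d-i$; this gives $m\sum_{i=A+1}^{w-d}\binom{w-d-i-1}{m-2}$, the second summand, and one checks it creates no double-count with the Restriction~2/3 deletions (the oversized part is never the forced leading $1$, and any end-correction of the $R^{*}$ type is zero or absorbed into the later sums). For Restriction~4 I would count the $i$-Ace compositions by stratifying on the sum $j$ of the length-$i$ prefix $(\la_1,\dots,\la_i)$, where each $\la_k\ge 2$ so $2i\le j\le 10-d$: the prefixes of a given $j$ are counted by a composition number of $j$ into $i$ parts $\ge 2$, the forced $1$ occupies position $i+1$, and the tail is an arbitrary composition of the remaining total. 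Summing over $j$ and $i$ and subtracting the overlaps with Restrictions~2 and 3 (tails beginning with $1$ or ending with a small $y$) should reorganize, after merging the prefix-count binomial with the overlap-correction binomial via Lemma~\ref{pasprop}, into $\sum_{j=2}^{10-d}\sum_{i=0}^{m+3}\binom{j-i-2}{i}\binom{s-d-j}{m-3-i}$.

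Finally, because $w-d>11$ the leading part \emph{may} legitimately equal $1$: it can represent an Ace first scored as $11$ and later reverted to $1$ to avoid busting on $b$. Such compositions were wrongly removed under Restriction~2 and must be added back. Parameterizing by the partial sum $k$ at the moment of reversion --- so the Ace-as-$11$ running total satisfies $d+11+k>b$, forcing $k\ge b-d-10$, while the reverted running total stays below the stand value, giving $k\le s-d-2$ --- and splitting each such composition into the cards drawn before reversion ($i-1$ of them) and those drawn after ($m-i-2$ of them) should yield the last double sum $\sum_{k=b-d-10}^{s-d-2}\sum_{i=1}^{m-2}\binom{k-b+s-2}{i-1}\binom{w-d-k-2}{m-i-2}$. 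Assembling all four contributions and telescoping the alternating binomial sums, as in Theorem~\ref{pretty}, then produces the stated formula.

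I expect the main obstacle to be the inclusion--exclusion bookkeeping rather than any single binomial identity: one must pin down exactly which compositions are hit by two or more of Restrictions~1--4 at once (in particular $i$-Ace compositions that also violate Restriction~3, and reverting-Ace compositions interacting with the $i$-Ace sets), ensure the ``the first $1$ really is the first $1$'' condition of Definition~\ref{i-ace} is enforced throughout, and verify that when $w-d\le 11$ the final three summands cancel so the formula collapses to Theorem~\ref{pretty} --- Lemma~\ref{tauempty} kills the $i$-Ace term, and the Restriction~1 term and the reverting-Ace term must each be shown to vanish in that range. Once the combinatorial classification is correct, the algebraic endgame of telescoping via Lemma~\ref{pasprop} should be routine.
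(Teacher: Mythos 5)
Your plan is essentially the paper's own proof: it starts from $c(m,w-d)$, reuses Theorem~\ref{pretty} to collapse the Restriction~2/3 bookkeeping into $\binom{s-d-2}{m-1}$, counts $R_1$ by choosing the position ($m$ ways) and value $i$ of the oversized part, counts $R_4$ by stratifying the $i$-Ace sets over the prefix sum $j$ and invoking $\hat{c}(i+1,j)=\binom{j-i-2}{i}$ via Lemma~\ref{chat}, and adds back $R_2^*$ with the same $(k,i)$ parameterization of the reverting Ace. The additional care you flag --- at most one part can exceed $A$, and the possible overlaps among Restrictions 1, 3, and 4 --- goes beyond what the paper actually verifies (it treats these deletion sets as disjoint without comment), so your outline is, if anything, more cautious than the published argument while taking the same route.
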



The extra terms become apparent when $w-d$ is no longer less than 11 since  $R_1$ and $R_4$ are no longer zero, and a few situations arise where the first card could be a 1.  Revisiting our example from earlier, when $w = 19$, $d = 2$, and $s = 17$, we allow $(1,3,10,3)$ since $$2 + (1 + 3 + 10 + 3) = 19.$$  In some situations, we see that Restriction 2 is removing compositions that should not be removed.  This leads us to revisit Restriction 2.

\theoremstyle{definition}
\newtheorem*{r2rev}{Restriction 2 (Revisited)} 
\begin{r2rev} In the general case, an Ace \textit{can} be the face-down card \textit{and} count as 1.
\end{r2rev}

It appears that Restriction 2 is removing too many compositions.  To compensate for this, we add terms removed from $R_2$ by noticing that any allowable 1 for an Ace as the face-down card has the form (for this example, $m = 6$) $$d + 1 + \underbrace{\lambda_2 + \lambda_3 + \lambda_4}_{\text{Piece 1}} \big| + \underbrace{\lambda_5 + \lambda_6}_{\text{Piece 2}} = w$$ where the vertical bar represents the tipping point upon which $$d + 1 + \lambda_2 < s$$ and $$d+11+\lambda_2+\lambda_3 > b$$ with $b$ representing the largest point total allowed before busting (normally 21).  This algebraically reduces to cases when $$b-d-11 < \lambda_2 + \lambda_3 < s-d-1.$$ We must add back these terms.  Let $R_2^*$ be the number of compositions removed by Restriction 2 that should not have been.  Then we must add back 

\begin{align*}
R_2^*&=\sum_{k=b-d-10}^{s-d-2} \sum_{i=1}^{m-2} \underbrace{c(i,k-(b-s-1))}_{\text{Piece 1}} \cdot \underbrace{c(m-i-1,w-(d+k+1))}_{\text{Piece 2}} \\
&= \sum_{k=b-d-10}^{s-d-2} \sum_{i=1}^{m-2} {k-b+s-2 \choose i-1}\cdot {w-d-k-2\choose m-i-2}
\end{align*} compositions to our total.  Note that this cannot happen when $w-d\leq 11$ so the previous theorem is unaffected.

To aid in proving Theorem \ref{generalcase}, we also define a way to enumerate the number of compositions of a given number with each part at least 2.  This will be helpful in enumerating the compositions represented by Restriction 4.


\theoremstyle{definition}
\newtheorem{chat}[thm]{Definition}
\begin{chat}\label{defchat}

Let $\hat{c}(m,n)$ be the compositions of $n$ with $m$ parts, with each $\la_i \ge 2$.

\end{chat} This definition is helpful in describing the kinds of compositions described by the $i$-Ace sets.  In order to be able to count how many compositions Restriction 4 removes, we establish how to enumerate such compositions.


\theoremstyle{plain}
\newtheorem{chatpf}[thm]{Lemma}
\begin{chatpf}[Enumerating $\hat{c}$] \label{chat}

The number of compositions of $n$ with $m$ parts all at least $2$ is the same as the number of compositions of $n-m$ with $m$ parts.  That is, $$\hat{c}(m,n)=c(m,n-m)={n-m-1 \choose m-1}.$$

\end{chatpf}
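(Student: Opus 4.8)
The plan is to prove this by exhibiting an explicit bijection between the two sets of compositions and then invoking the known formula from Definition \ref{caplamb}. First I would fix notation: let $\widehat{\Lambda}(m,n)$ denote the set of compositions $(\la_1,\dots,\la_m)$ of $n$ with each $\la_i \ge 2$, and let $\Lambda(m,n-m)$ denote the (ordinary) set of compositions of $n-m$ into $m$ positive parts, whose cardinality is $c(m,n-m)=\binom{n-m-1}{m-1}$.

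Next I would define the map $\varphi:\widehat{\Lambda}(m,n)\to\Lambda(m,n-m)$ by $\varphi(\la_1,\dots,\la_m)=(\la_1-1,\dots,\la_m-1)$; that is, decrement every part by $1$. I would check it is well defined: since each $\la_i\ge 2$, each $\la_i-1\ge 1$ is still a positive integer, and the parts sum to $\sum_{i=1}^m(\la_i-1)=\Big(\sum_{i=1}^m \la_i\Big)-m=n-m$, so the image is genuinely a composition of $n-m$ with $m$ parts. Then I would exhibit the inverse map $\psi(\mu_1,\dots,\mu_m)=(\mu_1+1,\dots,\mu_m+1)$ and verify $\psi$ lands in $\widehat{\Lambda}(m,n)$ and that $\varphi\circ\psi$ and $\psi\circ\varphi$ are the identity, establishing that $\varphi$ is a bijection. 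This gives $\hat{c}(m,n)=|\widehat{\Lambda}(m,n)|=|\Lambda(m,n-m)|=c(m,n-m)$.

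Finally I would substitute into the closed form for $c$ from Definition \ref{comp}: $c(m,n-m)=\binom{(n-m)-1}{m-1}=\binom{n-m-1}{m-1}$, which is exactly the claimed identity.

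I do not anticipate a serious obstacle here; the only point requiring a little care is the bookkeeping that the shift-by-one map really does preserve the number of parts and transform the total in the stated way (and, implicitly, that $n-m \ge m$ is forced for the set to be nonempty, so the binomial coefficient is interpreted correctly — when $n < 2m$ both sides are $0$). The argument is essentially the standard "$\la_i \mapsto \la_i - 1$" reduction used throughout restricted composition counting, and the main work is simply stating the bijection cleanly.
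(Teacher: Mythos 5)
Your proposal is correct and matches the paper's own argument essentially verbatim: the paper also defines $\varphi:\hat{\Lambda}(m,n)\rightarrow\Lambda(m,n-m)$ by subtracting $1$ from each part, notes the inverse map adds $1$ back, and concludes $\hat{c}(m,n)=c(m,n-m)=\binom{n-m-1}{m-1}$. Your version is if anything slightly more careful, since you explicitly verify well-definedness and the degenerate case $n<2m$.
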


\begin{proof}

Let $\hat{\Lambda}(m,n)$ be the set of all compositions of $n$ with $m$ parts, each at least 2.   Define $\varphi:\hat{\Lambda}(m,n)\rightarrow \Lambda(m,n-m)$ by $\varphi(\la_i)= \la_i-1$. Then $\varphi$ takes each $\la_i$ from each composition in $\hat{\Lambda}$ to $\la_i-1$.  Then, since we know that $\varphi$ is a bijection (with inverse map $\varphi^{-1}(\la_i)=\la_i+1$), $|\hat{\Lambda}(m,n)| = |\Lambda(m,n-m)|$, and so $$\hat{c}(m,n)=c(m,n-m)={n-m-1 \choose m-1}.$$

\end{proof}

The following remark demonstrates the proof of Lemma \ref{chat} using Young tableaux.


\theoremstyle{definition}
\newtheorem{exchat}[thm]{Remark}
\begin{exchat}
Consider the composition $3+2+4+2+3=14$.  Note that $(3,2,4,2,3)\in\hat{\Lambda}(5,14)$. The Young tableau $$\yng(3,2,4,2,3)$$ represents the composition $(3,2,4,2,3)$.  Then $\varphi$ maps this composition to (2,1,3,1,2) by removing 1 from each piece, as seen in  $$\yng(3,2,4,2,3) \overset{\varphi}{\longrightarrow} \yng(2,1,3,1,2) $$  and hence chopping off the first column.  Note that the second composition sums to 9, which is $14-5$.  We describe this action as $$\varphi:\hat{\Lambda}(5,14)\rightarrow \Lambda(5,9).$$

\end{exchat}


We now have the necessary tools to complete the proof of Theorem \ref{generalcase}.

\begin{proof}[Proof of General Case]

We proceed in a constructive way as earlier.  For the total number of legal compositions, we have
\begin{align*}
g(m,w,s,d)&=c(m,w-d)-R_1-R_2-R_3-R_4+R^* + R_2^*\\
&=c(m,w-d)-R_2-R_3+R^*-R_1-R_4 + R_2^*\\
&= {s-d-2 \choose m-1}-R_1-R_4 + R_2^*
\end{align*}
by Theorem \ref{pretty}.  Consider Restriction 1.  We must remove all compositions of $w-d$ that have any $\la_i > A$.  The number of such compositions is equivalent to the number of compositions of $w-d-i$ as $i$ ranges from $A+1$ to $w-d$.  Considering the $m$ possibilities for each one, we have that
\begin{align*}
R_1&= m\sum_{i=A+1}^{w-d} c(m-1,w-d-i) \\
&= m\sum_{i=A+1}^{w-d} {w-d-i-1 \choose m-2}.
\end{align*}
Now consider Restriction 4.  We must remove all of the compositions in each of the $i$-Ace sets.  That is, we must remove
\begin{align*}
R_4&=\sum_{\la \in \tau_i} c(m-1-i, s-d -(|\la|+1)) \\
&= \sum_{\la \in \tau_i} {s-d-|\la|-2 \choose m-i-2}
\end{align*}
compositions from our total.  We may represent this as a double summation and expand.  Since $i$ ranges from $1$ to $m-2$, we have that $$R_4= \sum_{i=1}^{m-2} \sum_{\la \in \tau_i} {s-d-|\la|-2 \choose m-i-2}.$$ Expanding the outer sum, we have
\begin{align*}
\sum_{\la \in \tau_1} {s-d-|\la|-2 \choose m-3} &+ \sum_{\la \in \tau_2} {s-d-|\la|-2 \choose m-4} + \cdots  \\
&+ \cdots + \sum_{\la \in \tau_{m-2}} {s-d-|\la|-2 \choose 0}.
\end{align*} In each sum, we are summing over all of the elements of each $\tau_i$, which ranges from $1$ to at most $10-d$.  We manipulate the sums to obtain
$$
\begin{aligned}
\sum_{j=2}^{10-d}& \big| \{\la \in \tau_1 : |\la|=j\} \big| \cdot {s-d-j \choose m-3} + \cdots \\
&\cdots + \sum_{j=2}^{10-d}\big| \{\la \in \tau_{m-2} : |\la|=j\}  \big| \cdot {s-d-j \choose 0}.
\end{aligned}$$ In essence, we are considering the size of the different $i$-Ace sets.  For each set, we are looking at compositions of $j$ of length $i+1$ with each part $\la_k \ge 2$.  This is equivalent to $\hat{c}(i+1,j)$.  Using Lemma \ref{chat}, we change the summands to

$$\sum_{j=2}^{10-d} \sum_{i=0}^{m+3} \hat{c}(i+1,j)\cdot {s-d-j \choose m-3-i}.$$ And so, for Restriction 4, we have that $$R_4=\sum_{j=2}^{10-d} \sum_{i=0}^{m+3} {j-i-2 \choose i}{s-d-j \choose m-3-i}.$$  Combining our results with our revisitation of Restriction 2, we have that
 \begin{align*}
g(m,w,s,d)=&{s-d-2 \choose m-1}-R_1-R_4 + R_2^*\\
=&\binom{s-d-2}{m-1}-  m\sum_{i=12}^{w-d} {w-d-i-1 \choose m-2} \\
&-\sum_{j=2}^{10-d} \sum_{i=0}^{m+3} {j-i-2 \choose i}{s-d-j \choose m-3-i} \\
&+ \sum_{k=b-d-10}^{s-d-2} \sum_{i=1}^{m-2} {k-b+s-2 \choose i-1}\cdot {w-d-k-2\choose m-i-2}.
\end{align*}

\end{proof}


\theoremstyle{definition}
\newtheorem{exgen}[thm]{Example}
\begin{exgen}We are interested in knowing the number of ways the dealer can reach $w=18$ in $m=3$ cards with a face up card of $d=2$.  Plugging in to Theorem \ref{generalcase}, we have that

$$ \begin{aligned}
g(3,18,17,2) &=\binom{17-2-2}{3-1}-  3\sum_{i=12}^{18-2} {18-2-i-1 \choose 3-2} \\
&\qquad- \sum_{j=2}^{10-2} \sum_{i=0}^{3+3} {j-i-2 \choose i}{17-2-j \choose 3-3-i}  \\
&\qquad + \sum_{k=21-2-10}^{17-2-2} \sum_{i=1}^{3-2} {k-21+2-2 \choose i-1}\cdot {18-2-k-2\choose 3-i-2} \\
&= 187.
\end{aligned}$$

\end{exgen}


\section{Applications in Probability}
Now that we can enumerate how many ways the dealer can reach a certain point value, we can calculate the probability that he or she does.  For ease of calculations, we introduce the following theorem.


\theoremstyle{plain}
\newtheorem{infdeck}{Theorem}[section]
\begin{infdeck}[Infinite Deck Assumption] \emph{\cite{Dave}}
The infinite deck assumption in Blackjack fixes the probability of getting any non-ten-valued card as $4/52$ and getting a ten-valued card as $16/52$.
\end{infdeck}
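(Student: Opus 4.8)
The plan is to realize the infinite deck assumption as the limiting behavior of dealing from $N$ copies of a standard $52$-card deck as $N \to \infty$, and to check that in this limit each card dealt becomes an independent draw with the stated probabilities. First I would record the composition of a single standard deck: it has $13$ ranks (Ace, $2$ through $10$, Jack, Queen, King) with exactly $4$ cards of each rank, one per suit. For scoring, the four ranks $10$, Jack, Queen, and King are all worth ten points, so a single deck contains $4\cdot 4 = 16$ ten-valued cards and exactly $4$ cards of each other point value; note that an Ace contributes a single physical card whether it is later scored as $1$ or as $11$, so there is no double-counting.

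Next I would combine $N$ shuffled decks into one pile of $52N$ cards containing $4N$ cards of each rank and $16N$ ten-valued cards. The probability that the first card dealt is ten-valued is $16N/52N = 16/52$, and the probability that it is any fixed non-ten rank is $4N/52N = 4/52$, both independent of $N$. The key step is to show that conditioning on previously dealt cards washes out in the limit: after removing any fixed finite multiset of $k$ cards, the remaining pile has $16N - a$ ten-valued cards out of $52N - k$ total for some $0 \le a \le k$, so the conditional probability of drawing a ten-valued card next is $(16N-a)/(52N-k) \to 16/52$ as $N\to\infty$, and similarly for the other values. Since $k$ is fixed and the histories of length $k$ are finite in number, this convergence is uniform over all such histories, so the joint law of any finite initial segment of deals converges to the corresponding product measure. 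Hence in the limiting model the deals are i.i.d.\ with exactly the distribution claimed, which is the infinite deck assumption of \cite{Dave}.

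The main obstacle — and really the only point requiring care, since the statement is close to a modeling definition — is making this "washes out" step rigorous: one must phrase the $\varepsilon$--$N$ estimate so that it holds uniformly over the possible bounded-length histories, thereby upgrading marginal convergence to convergence of the finite-dimensional distributions to a product measure. This is routine once the rank-versus-point-value bookkeeping of the first paragraph is in place, so the substance of the argument is the combinatorial description of the deck rather than any analytic difficulty, and the theorem follows.
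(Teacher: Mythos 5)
The paper does not actually prove this statement: it is presented as a named assumption imported from \cite{Dave}, essentially a modeling definition that fixes the per-card probabilities at $4/52$ and $16/52$ and is then used downstream in Proposition 5.2. Your proposal supplies a justification the paper omits, namely realizing the assumption as the $N\to\infty$ limit of dealing without replacement from $N$ standard decks. The argument is correct: the bookkeeping that a deck has $16$ ten-valued cards (the ranks $10$, J, Q, K across $4$ suits) and $4$ cards of each other value is right, and the observation that the conditional probability after removing a bounded history of $k$ cards is $(16N-a)/(52N-k)\to 16/52$ uniformly over histories does upgrade marginal convergence to convergence of finite-dimensional distributions to a product measure. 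What your approach buys is an actual derivation of why the independence and the specific values $4/52$, $16/52$ are the right idealization, which also explains the paper's later remark that the error shrinks as the number of decks grows; what the paper's approach buys is brevity, treating the statement as an axiom of the model rather than a theorem with content. One small caution: since the paper uses this as a definition, your ``proof'' is really a consistency check that the definition is the limit of the finite-deck models, and you should present it as such rather than as a deduction from prior results in the paper.
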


From this point on, we will use this assumption.  This assumption introduces very little error in calculation, and can be used to determine strategies for games of Blackjack with a large number of decks.  Naturally, as the number of decks increases, the error decreases.  For a strategy chart for Blackjack with eight decks, only one out of the 290 decision boxes using the infinite deck assumption is wrong, and only by a negligible amount \cite{Dave}.

We combine Theorem \ref{pretty} with the Infinite Deck Assumption to provide the following proposition.


\theoremstyle{plain}
\newtheorem{probs}[infdeck]{Proposition}
\begin{probs}\label{probb}

Let $s$ be the the point value where the dealer must stand, $d$ be the point value of the dealer's face-up card, and $w$ be the desired number of points. If $w-d \le 9$, then the probability the dealer's final point total is $w$ is approximately $$p_w(s,d)=\frac{1}{13}\left(\frac{14}{13} \right )^{s-d-2}$$

\end{probs}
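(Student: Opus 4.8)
The plan is to express the event that the dealer finishes with exactly $w$ points as a disjoint union over $m$, the number of additional cards drawn, count the relevant compositions with Theorem \ref{pretty}, weight each one using the Infinite Deck Assumption, and collapse the resulting sum with the binomial theorem. Since $w-d\le 9$ we certainly have $w-d\le 11$, so Theorem \ref{pretty} applies and tells us that the number of legal ways the dealer reaches $w$ using exactly $m$ of the unknown cards is $n(m,s,d)=\binom{s-d-2}{m-1}$; the restrictions built into that count already guarantee the dealer does not stand before the total hits $w$. The events ``dealer reaches $w$ with exactly $m$ cards'' are mutually exclusive as $m$ varies, so $p_w(s,d)=\sum_{m\ge 1}\Pr[\text{reach }w\text{ with }m\text{ cards}]$.

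Next I would pin down the probability of one fixed composition $(\lambda_1,\dots,\lambda_m)$ of $w-d$. Because the $\lambda_i$ are positive integers summing to $w-d\le 9$, every part satisfies $1\le\lambda_i\le 9$; in particular no part is a ten-valued card and no part is an ace read as $11$. Under the Infinite Deck Assumption each of the values $1,2,\dots,9$ is drawn with probability $4/52=1/13$, so the chance of drawing exactly the cards of this composition, in order, is $(1/13)^m$ --- a value that does not depend on which composition of $w-d$ we chose. Multiplying by the count from Theorem \ref{pretty} gives $\Pr[\text{reach }w\text{ with }m\text{ cards}]=\binom{s-d-2}{m-1}\left(\tfrac{1}{13}\right)^m$.

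Finally I would sum over $m$. The binomial coefficient $\binom{s-d-2}{m-1}$ vanishes once $m-1>s-d-2$, so the sum is finite, and reindexing by $k=m-1$ yields $p_w(s,d)=\frac{1}{13}\sum_{k=0}^{s-d-2}\binom{s-d-2}{k}\left(\tfrac{1}{13}\right)^k$, which by the binomial theorem equals $\frac{1}{13}\left(1+\tfrac{1}{13}\right)^{s-d-2}=\frac{1}{13}\left(\tfrac{14}{13}\right)^{s-d-2}$, as claimed.

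I expect the only delicate point to be the middle step: one must check that \emph{every} part of \emph{every} legal composition counted by Theorem \ref{pretty} is at most $9$, so that all compositions of a given length are equiprobable with weight $(1/13)^m$. This is exactly what forces the hypothesis $w-d\le 9$ rather than the weaker $w-d\le 11$ needed for Theorem \ref{pretty}: if $w-d$ reached $10$ or $11$, a ten-valued card could appear (probability $16/52$) and the compositions of a fixed length would carry different weights, breaking the clean binomial collapse. It is also worth a sentence noting that the word ``approximately'' refers solely to the error introduced by replacing a real shoe with the Infinite Deck Assumption; under that assumption the displayed formula is an exact identity.
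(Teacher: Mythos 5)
Your proposal is correct and follows essentially the same route as the paper: decompose by the number $m$ of additional cards, count each case with Theorem \ref{pretty}, assign each composition probability $(1/13)^m$ under the Infinite Deck Assumption (valid because $w-d\le 9$ rules out ten-valued cards), and collapse the sum with the binomial theorem. Your closing observations---that the hypothesis $w-d\le 9$ is exactly what keeps every card weight at $1/13$, and that ``approximately'' refers only to the infinite-deck idealization---are accurate and match the paper's own remark following the proposition.
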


\begin{proof}

The dealer needs to reach a point value of $w$ in $s-d-1$ steps or less. Since $w-d \le 9$, we are only considering cards worth less than 10.  If they take one additional card, there are $n(1,s,d)$ possibilities by Theorem \ref{pretty}, each with a probability of $4/52=1/13$, since there are four suits of a card with a particular point value out of a total of 52 cards.  Then the probability of reaching $w$ with one additional card, $p_w^1$, is $$p_w^1=n(1,s,d)\cdot \left (\frac{1}{13} \right )={s-d-2 \choose 0}\left (\frac{1}{13} \right ).$$  Similarly, if the dealer were to take two cards in addition to the face up card, there are $n(2,s,d)$ possibilities, each with a probability of $\frac{1}{13} \cdot \frac{1}{13}$.  If we are considering the composition $(\la_1,\la_2)$, the probability of drawing $\la_1$ is 1/13 and the probability of drawing $\la_2$ is 1/13.  So the probability of reaching $w$ with two additional cards, $p_w^2$, is $$p_w^2={s-d-2 \choose 1}\left (\frac{1}{13} \right )\left (\frac{1}{13} \right )={s-d-2 \choose 1}\left (\frac{1}{13} \right )^2.$$In general, the probability that the dealer's final point total is $w$ after drawing $j$ cards is $$p_w^j={s-d-2 \choose j-1}\left (\frac{1}{13} \right )^j.$$  Since the dealer can reveal anywhere between one and $s-d-1$ additional cards, we sum over all the probabilities of the allowable compositions to reach $w$, $$\begin{aligned}\sum_{j=1}^{s-d-1} p_w^j & =\sum_{j=1}^{s-d-1} {s-d-2 \choose j-1}\left (\frac{1}{13} \right )^j \\&= \sum_{j=0}^{s-d-2} {s-d-2 \choose j}\left (\frac{1}{13} \right )^{j+1} \\&= \sum_{j=0}^{s-d-2} {s-d-2 \choose j}\left (\frac{1}{13} \right )^{j}\left (\frac{1}{13} \right ) \\&= \left (\frac{1}{13} \right )\sum_{j=0}^{s-d-2} {s-d-2 \choose j}\left (\frac{1}{13} \right )^{j}\end{aligned}$$
We may apply the Binomial Theorem to the summation, giving us 
\begin{align*}
\sum_{j=1}^{s-d-1} p_w^j&= \left ( \frac{1}{13} \right ) \left (1+\frac {1}{13} \right )^{s-d-2} \\
&=\frac{1}{13}\left(\frac{14}{13} \right )^{s-d-2}.
\end{align*}

\end{proof}

\theoremstyle{definition}
\newtheorem{probremark}[infdeck]{Remark}
\begin{probremark}
The restriction $w-d \le 9$ assures us that the probability of drawing a certain card is $1/13$ and that we may use the more attractive closed form of Theorem \ref{pretty}.
\end{probremark}

\newtheorem{exprob}[infdeck]{Example}
\begin{exprob}
Suppose a player is playing standard Blackjack against a dealer with a face up Jack ($d=10$).  The player is interested in knowing the probability that the dealer reaches 17.  Plugging in to Proposition \ref{probb} yields 
\begin{align*}
p_{17}(17,10)&=\frac{1}{13}\left(\frac{14}{13} \right )^{17-10-2} \\
&=\frac{1}{13}\left(\frac{14}{13} \right )^{5} \\
&\approx 0.1114.
\end{align*}
\end{exprob}

\theoremstyle{definition}
\newtheorem{exprob2}[infdeck]{Example}
\begin{exprob2}
Now suppose that the player has 17 points.  The dealer, after drawing several cards, has 12 points total.  The concerned player is interested in knowing the probability that the dealer beats him.  To calculate this, we consider the probability that the dealer ends with a final score between 18 and 21. 
\begin{align*}
\sum_{i=18}^{21} p_i(17,12) &= \sum_{i=18}^{21} \frac{1}{13}\left(\frac{14}{13} \right )^{3} \\
&=4 \cdot \frac{1}{13}\left(\frac{14}{13} \right )^{3} \\
&\approx 0.3843.
\end{align*}

\end{exprob2} 


\section{Conclusion}
We have provided a closed form to calculate the number of ways the dealer can reach a number within 11 of the his or her face up card.  Although this is sufficient in most situations, we also provide a general form to calculate all possible situations.  This general form abstracts all of the rules of Blackjack, including the range of card values, the rule where the dealer must stand, and the target total of points.  
\subsection{Future Goals}
In the future, we wish to simplify Theorem \ref{generalcase} to make it into a closed form resembling Theorem \ref{pretty}, allowing us to expand Proposition \ref{probb} to include all cases.


\end{document}